\theoremstyle{theorem}
\newtheorem{theorem}{Theorem}
\newtheorem{proposition}[theorem]{Proposition}
\newtheorem{lemma}[theorem]{Lemma}
\theoremstyle{definition}
\newtheorem{definition}[theorem]{Definition}
\newtheorem{example}[theorem]{Example}
\newtheorem{problem}[theorem]{Problem}
\theoremstyle{remark}
\newtheorem{remark}[theorem]{Remark}
\theoremstyle{notation}
\newcommand{\al}{\alpha}
\newcommand{\be}{\beta}
\newcommand {\CC}{\mathbb{C}}
\newcommand {\PP}{\mathbb{P}}
\newcommand {\G}{\mathfrak{G}}
\newcommand{\Ann}{{\rm Ann}}
\newcommand{\gb}{\mathfrak{b}}
\newcommand{\Hom}{{\rm Hom}}
\renewcommand{\deg}{{\rm deg}}
\title{A characterization of the Macaulay dual generators for quadratic complete intersections} 
\author{T.\  Harima, Niigata University \\ Department of Mathematics Education, Niigata, 950-2181 Japan
\thanks{Supported by JSPS KAKENHI Grant 15K04812.} 
\and A.\ Wachi, Hokkaido University of Education \\  Department of Mathematics,  
\\ Kushiro, 085-8580 Japan
\and  J.\ Watanabe, Tokai University \\ Department of Mathematics, Hiratsuka, 259-1292 Japan
}
\begin{document}

\maketitle
\date{}

\def\pa{{\partial}}
   
\begin{abstract} 
  Let $F$  be a homogeneous polynomial in $n$ variables of degree $d$ over a field $K$.
  Let $A(F)$ be the associated Artinian graded $K$-algebra.
  If $B \subset A(F)$  is a subalgebra of $A(F)$ which is Gorenstein with the
  same socle degree as $A(F)$, we describe the Macaulay
  dual generator for $B$ in terms of  $F$.
  Furthermore when $n=d$, we give necessary and sufficient
  conditions on the polynomial $F$ for $A(F)$ to be a complete intersection.  
\end{abstract}

\section{Introduction}
Let $R=K[x_1, \ldots, x_n]$ be the polynomial ring in $n$ variables over a field of characteristic zero and
$R_d$ the homogeneous space of degree $d$.  For $F \in R_d$,
let $A=A(F)$ be the graded Artinian Gorenstein algebra associated with $F$. So $A$ has socle degree $d$ and embedding dimension at most $n$.
It is a long standing problem to characterize forms $F \in R_d$ for which  the associated Artinian Gorenstein algebras $A(F)$
are complete intersections.
If $F$ is a monomial, then $A(F)$ is a monomial complete intersection.  The only other known cases are a few sporadic 
examples (cf.\cite{solomon_1}, \cite[Examples 2.82--2.85]{HMMNWW}) that occur as the algebra of co-invariants by pseudo reflection groups.
It seems that there is a tendency among the experts to think there are no easily verifiable conditions which enable us to tell,
for a given $F$, whether or not the algebra  $A(F)$ is a complete intersection.

However,
it is easy to see that if the degree of $F$ is less than $n$,
then $A(F)$ cannot be a complete intersection, since the socle degree of $A(F)$ is
equals to the degree of the Jacobian of the generators.  
When $A(F)=R/I$ is a graded complete intersection with quadratic generators for the defining ideal $I=\Ann _R(F)$, then
the degree of $F$ is $n$. In Theorem~\ref{main_theorem} of this paper 
we give necessary and sufficient conditions on a form $F$ for $A(F)=R/I$ to be a
complete intersection.

There is yet another result of this paper.  We discuss the relation of  Macaulay dual generators for
two Artinian Gorenstein algebras $A$ and $B$, with $A \supset B$, when the two algebras have the same socle degree.
This was one of the topics discussed in the workshop at BIRS in March 2016, under the title ``The Lefschetz Properties and Artinian algebras.''
There is a good reason to think that many complete intersections can be obtained as subrings of
quadratic complete intersections (cf.\ \cite{harima_wachi_watanabe_1} \cite{mcdaniel}).
We will show that a Macaulay dual generator for $B$ can be obtained from that of
$A$ by substituting the linear forms for
the variables with duplications allowed. In Theorem~\ref{main_thm_2} we show that the Gorenstein
Artinian algebra $A$ has a sub-quotient $B$ of the same socle degree if and only if a Macaulay dual generator for
$B$ can be obtained from that of $A$ by substituting the linear forms for the variables.
This is independent of Theorem~\ref{main_theorem} and in this theorem
the socle degree and embedding dimension of $A$ are arbitrary. 

When we speak about the Macaulay dual generator of a Gorenstein algebra, it is important to specify the structure of
the inverse system or in the modern term, the injective hull of the residue field. The injective hull does not have a structure
of a ring; it is possible, however, to regard it as the divided power algebra,
induced by the natural structure of the Hopf algebra associated with the polynomial ring.
In characteristic zero, the divided power algebra is the same as the polynomial ring. Throughout \S\S 2--3,
we assume, for simplicity,  that the characteristic of the ground field is zero, and assume that the injective
hull is the same as the polynomial ring itself but the action of the algebra is defined through differentiation.
Nonetheless all arguments are valid for a positive characteristic $p$ provided that $p$ is greater than the degree of $F$.
Verification is left to the reader. A basic fact on the Macaulay's double annihilator theorem is summarized in the
Appendix based on the treatment in Meyer-Smith~\cite{smith_1002}.  The reader may also wish to consult
Geramita~\cite{Ge} and Iarrobino-Kanev~\cite{IK} for the treatment of the inverse system.    

The authors would like to thank Tony Iarrobino very much for many comments to improve this paper. 
The third author thanks Larry Smith very much for the suggestion of the usage of the divided power
algebra for the inverse system of Macaulay.

\section{Some necessary conditions for a homogeneous form to define a quadratic complete intersection} 

Throughout this section  $K$ denotes a field of characteristic $0$, and 
$R=K[x_1, x_2, \ldots, x_n]$ denotes the polynomial ring over $K$.  We assume each variable has degree $1$. 
We denote by $R_d$ the homogeneous space of $R$ of degree $d$.  
Thus we may write 
\[R=\bigoplus _{d=0}^{\infty} R _d.\]

We  regard $R$ as an $R$-module via the operation ``$\circ$'' defined by  
\[
f(x_1, x_2, \ldots, x_n)\circ F=f\left(\frac{\pa}{\pa x_1},\frac{\pa}{\pa x_2}, \ldots,
\frac{\pa}{\pa x_n}\right)F,\] 
for $(f, F) \in R \times R$. 
With this operation $R$ is the injective hull of the residue field in the category of 
finitely generated modules (see Appendix).  
Thus if  $f \in R_i$  and  $F \in R_d$,  then  $f\circ F$  is an element of $R_{d-i}$.  
For $F \in R$, $\Ann _R(F)$ denotes 
\[\Ann _R(F)= \{ f \in R| f\circ F=0 \}.\]
It is  the {\bf annihilator} of $F$. 
$A(F)$ denotes the  algebra $A(F)=R/\Ann _R(F)$. 
We will say that $A(F)$ is the {\bf Gorenstein algebra defined by} $F$ or simply  $A(F)$ {\bf is defined by} $F$.  
We will call the vector space $\Ann(F) _2 \subset R_2$ the {\bf quadratic space defined by  $F$} and 
denote it by $Q(F)$.
Namely, the quadratic space  $Q(F)=\Ann(F) _2 \subset R_2$ is the kernel of 
the homomorphism
\[f \in R_2 \mapsto f \circ F \in R_{d-2}.\] 
Note that 
we have the exact sequence 
\[0 \to Q(F) \to R_2 \to  R_2/ \Ann _R(F)_2 \to 0.\]

For a graded vector space $V=\bigoplus _{i=0}^{\infty} V_i$, we write 
\[H_V (T)=\sum _{i=0}^{\infty}(\dim _K V_i) T^i\]
for the Hilbert series of $V$. 

\begin{definition}   \label{definition_qci} 
An Artinian algebra $A=R/I$ will be called a {\bf quadratic complete intersection} if it is a complete intersection  
and the Hilbert series is $(1+T)^d$ for some $d$.   (The ideal $I$ may contain linear forms.) 
\end{definition}

\begin{proposition} \label{necessary_condition} 
Let $F \in R=K[x_1, x_2, \ldots, x_n]$ be a homogeneous polynomial.  
Suppose that $H_{A(F)}(T) = (1+T)^n.$ 
Then we have 
\begin{enumerate}
\item[{\rm (1)}] $\deg \;F=n$. 
\item[{\rm (2)}] 
No linear forms are contained in $\Ann _R(F)$. 
\item[{\rm (3)}]
The partial derivatives $\frac{\pa F}{\pa x_1}, \frac{\pa F}{\pa x_2}, \ldots,  \frac{\pa F}{\pa x_n}$ 
are linearly independent. 
\item[{\rm (4)}]
The quadratic space $Q(F)$ defined by $F$ has dimension  $n$. 

\item[{\rm (5)}] 
$\dim _K (R_2\circ F) =\dim _K(R_{n-2}\circ F) ={n \choose 2}$.

\end{enumerate}
\end{proposition}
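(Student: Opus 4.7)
The plan is to read off each assertion directly from the hypothesis $H_{A(F)}(T)=\sum_{i=0}^{n}\binom{n}{i}T^{i}$, using the single unifying fact that Macaulay's double annihilator theorem identifies $A(F)_i=R_i/\Ann_R(F)_i$ with $R_i\circ F\subseteq R_{d-i}$ as a $K$-vector space. In particular, for each $i$,
\[
\dim_K A(F)_i \;=\; \dim_K R_i - \dim_K \Ann_R(F)_i \;=\; \dim_K(R_i\circ F).
\]
With this identity everything reduces to matching dimensions in the given Hilbert series, so there is no real obstacle; the content is organizational.

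For (1), the socle degree of $A(F)$ equals $\deg F$: $F=1\circ F\in R_d\circ F\cong A(F)_d$ is nonzero, and $R_{d+1}\circ F=0$. Since $(1+T)^n$ has top nonzero coefficient in degree $n$, I conclude $\deg F=n$. For (2), the coefficient of $T$ in the Hilbert series is $n=\dim_K R_1$, forcing $\Ann_R(F)_1=0$. Statement (3) is then immediate: the partials $\partial F/\partial x_i = x_i\circ F$ are the images of the basis $x_1,\dots,x_n$ of $R_1$ under the injective map $\ell\mapsto\ell\circ F$ (injective by (2)), hence linearly independent in $R_{n-1}$.

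For (4), the coefficient of $T^2$ in $(1+T)^n$ gives $\dim_K A(F)_2=\binom{n}{2}$, and so
\[
\dim_K Q(F) \;=\; \dim_K \Ann_R(F)_2 \;=\; \binom{n+1}{2}-\binom{n}{2} \;=\; n.
\]
For (5), applying the isomorphism $A(F)_i\cong R_i\circ F$ in degrees $2$ and $n-2$ and using the symmetry $\binom{n}{2}=\binom{n}{n-2}$ of the coefficients of $(1+T)^n$ yields both equalities simultaneously. (Equivalently, one may invoke Gorenstein duality of $A(F)$, which is built into the Macaulay correspondence.)

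In short, I would organize the proof as a single preliminary line installing the identification $\dim A(F)_i=\dim(R_i\circ F)$, followed by five short paragraphs, one per item, each of which is a one-line dimension count against the binomial coefficients of $(1+T)^n$. The only care needed is to state (1) before using it in (2)--(5), since the fact that the socle degree of $A(F)$ is exactly $n$ is what makes the indices $i=1,2,n-2$ meaningful for $F$ of degree $n$.
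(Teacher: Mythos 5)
Your proposal is correct and follows essentially the same route as the paper: both rest on the identification $A(F)_i\cong R_i\circ F$ and then read each item off as a dimension count against the coefficients of $(1+T)^n$, with (4) and (5) handled via the exact sequence $0\to Q(F)\to R_2\to A(F)_2\to 0$ and the isomorphism $R_2\circ F\cong R_{n-2}\circ F$. No substantive difference to report.
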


\begin{proof} 
\begin{enumerate}
\item[(1)] 
Recall that the homomorphism of $R$-modules 
\[R \to R \]
defined by $f \mapsto f\circ F$ induces the degree reversing isomorphism of vector spaces 
\[R/\Ann _R(F)=A(F) \to R\circ F \subset R,\]
\[\overline{f} \mapsto f\circ F,\]
where 
\[ {\overline f} = f \bmod \Ann _R(F).\]
This shows that if the algebra $A(F)$ has the Hilbert series $(1+T)^n$, then $F$ has degree $n$.

\item[(2)] 
Note that $A(F)_1=R_1/\Ann _R(F)_1$.  Since $\dim _K A(F)_1=n$, this shows that 
$\Ann _R(F)_1=0$.  Hence $\Ann _R(F)$ contains no linear forms.

\item[(3)]
Note that $A(F)_1 \cong R_{1}\circ F$.  Since $\dim _K R_1\circ F = n$, this shows that 
the first partials of $F$ are linearly independent. 

\item[(4)] (5)
Consider the exact sequence 
\[0 \to Q(F) \to R_2 \to R_2/\Ann _R(F)_2 \to 0.\]
Note that $\dim R_2= {n+1 \choose 2}$  
and $\dim _K A(F)_2 = {n \choose 2}$.  The assertions follow from  
the isomorphisms  
$A(F)_2 =  R_2/\Ann _R(F)_2 
\cong   
R_2 \circ F     \cong  
R_{n-2} \circ F$. 
\end{enumerate}
\end{proof}

\section{A characterization of the Macaulay dual generator for quadratic complete intersections}

Following is a characterization of $F\in R$ which defines a quadratic complete intersection.    
\begin{theorem}  \label{main_theorem}
As before $R$ denotes the polynomial ring in $n$ variables over a field $K$ of characteristic zero.  
Let $F \in R$ be a polynomial of degree $n$. 
Suppose that the partial derivatives $\frac{\pa F}{\pa x_1}, \cdots, \frac{\pa F}{\pa x_n}$ are linearly independent. 
Then the Artinian Gorenstein algebra $A(F)=R/\Ann _R(F)$  
is a quadratic complete intersection if and only if one of the following conditions is satisfied. 

\begin{enumerate}
\item[{\rm (1)}] 
The quadratic space  $Q(F)$ is $n$-dimensional and generates $\Ann _R(F)$ as an ideal of $R$.  
\item[{\rm (2)}]
The quadratic space  $Q(F)$ contains a  regular sequence of length $n$ in $R$.  
\end{enumerate}
\end{theorem}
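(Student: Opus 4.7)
The plan is to separate the statement into three implications: that a quadratic complete intersection forces both (1) and (2), and that either (1) or (2) conversely implies $A(F)$ is a quadratic complete intersection. The forward direction will follow from a short dimension count. Assuming $A(F)$ is a quadratic CI, Proposition~\ref{necessary_condition} already gives $\dim_K Q(F) = n$ and $\Ann_R(F)_1 = 0$. Because $\Ann_R(F)$ has height $n$ in the polynomial ring and is by hypothesis a complete intersection, it is minimally generated by exactly $n$ elements, all of degree at least $2$. The computation $\dim_K \Ann_R(F)_2 = \binom{n+1}{2}-\binom{n}{2} = n$ then forces all minimal generators to be quadratic and to span $Q(F)$; the regular sequence they form lives in $Q(F)$. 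Both (1) and (2) follow immediately.

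For the implication (1) $\Rightarrow$ quadratic CI, I would exploit the fact that an ideal generated by $n$ elements in an $n$-dimensional Cohen--Macaulay ring whose quotient is Artinian is generated by a regular sequence. Given $n$ quadratic forms $q_1,\ldots,q_n$ spanning $Q(F)$ with $\Ann_R(F)=(q_1,\ldots,q_n)$, the ring $A(F)=R/(q_1,\ldots,q_n)$ is Artinian, so the $q_i$ form a system of parameters for $R$. Since $R$ is Cohen--Macaulay, any system of parameters is a regular sequence, so $A(F)$ is a complete intersection of quadrics whose Hilbert series is $(1-T^2)^n/(1-T)^n=(1+T)^n$.

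For (2) $\Rightarrow$ quadratic CI, the argument relies on the rigidity of graded Artinian Gorenstein algebras. Given a regular sequence $q_1,\ldots,q_n\in Q(F)$, the quotient $B=R/(q_1,\ldots,q_n)$ is itself a graded Artinian Gorenstein quadratic complete intersection whose socle is one-dimensional in top degree $n$. Because each $q_i$ annihilates $F$, there is a natural graded surjection $B\twoheadrightarrow A(F)$. Since $\deg F=n$, the algebra $A(F)$ also has socle degree $n$. Every nonzero graded ideal of the Artinian Gorenstein algebra $B$ meets its top socle, so any proper graded quotient of $B$ has strictly smaller socle degree. Therefore the surjection $B\twoheadrightarrow A(F)$ is forced to be an isomorphism, and $A(F)$ is a quadratic complete intersection.

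The only substantive ingredient beyond Proposition~\ref{necessary_condition} and the Cohen--Macaulay system-of-parameters trick is the socle-rigidity step, and it should be the main conceptual checkpoint of the proof. Concretely, one needs the observation that a graded Artinian Gorenstein algebra is an essential extension of its one-dimensional top socle as a module over itself, so that any nonzero homogeneous ideal captures the socle and kills the top degree. Once this is established, the three implications fit together cleanly to yield the equivalence asserted by the theorem.
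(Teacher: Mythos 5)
Your proposal is correct and follows essentially the same route as the paper: the forward direction by reading off the quadric regular sequence from the Hilbert series, (1) via the system-of-parameters argument in the Cohen--Macaulay ring $R$, and (2) via the surjection $R/(q_1,\ldots,q_n)\twoheadrightarrow A(F)$ killed off by the fact that a nonzero ideal of an Artinian Gorenstein algebra must contain the socle. You simply spell out in more detail the steps the paper leaves terse (e.g.\ ``an Artinian Gorenstein local ring has the smallest nonzero ideal''), so nothing further is needed.
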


\begin{proof}
Assume that $A(F)$ is a quadratic complete intersection.  
Then $\Ann _R(F)$ is  generated by a regular sequence consisting of $n$ homogeneous polynomials of degree two. 
Hence we have both (1) and (2).

Conversely assume (2). Let $I$ be the ideal generated by a regular sequence in $Q(F)$. 
Then we have a surjective map 
\[ R/I \to R/\Ann _R(F)=A(F).  \] 
Since $R/I$ and $R/\Ann _R(F)$ are Gorenstein with the same socle degree, we have  $A(F)=R/I$.  
(To see this recall that an Artinian
Gorenstein local ring has the smallest nonzero ideal.)  
Assume (1). Then a basis for $Q(F)$ is a regular sequence in $R$. Hence $\Ann _R(F)$ is generated by a 
regular sequence consisting of quadrics.    
\end{proof}

\begin{remark}
Theorem~\ref{main_theorem} gives us an algorithm which determines whether or not the algebra 
$A(F)=R/\Ann_R(F)$ is a quadratic  complete intersection for  
a given $F \in K[x_1, \ldots, x_n]$.   
The algorithm proceeds as follows.
\begin{enumerate}
\item[(1)]  
Let  $F \in K[x_1, \ldots, x_n]_n$ be a homogeneous form of degree  $n$.  
\item[(2)]    
Check if the partials $\frac{\pa F}{\pa x_1}, \ldots, \frac{\pa F}{\pa x_n}$ are linearly independent.   
If they are linearly dependent, $F$ reduces to a polynomial of a smaller 
number of variables, but it has  degree  $n$. So in this case $A(F)$ is not a quadratic complete intersection. 
(It could be a complete intersection with a smaller embedding dimension than $n$.) 
If they are linearly independent, compute the second  partials of $F$. 
If $\dim _K(R_2\circ F) \neq {n \choose 2}$ or equivalently 
$\dim _K Q(F) \neq n$, then $A(F)$ cannot be a quadratic complete intersection. 
\item[(3)] 
If $\dim _K Q(F) = n$, let $\langle f_1, \ldots, f_n \rangle$ be a $K$-basis of $Q(F)$. 
Compute the rank of the vector space $V=R_{n-1}f_1 + R_{n-1}f_2 + \cdots + R_{n-1}f_n$. 
If $\dim _K V={2n \choose n+1 }$, then $\Ann _R(F)$ is a quadratic complete intersection; otherwise it is not.
(Note that $V \subset R_{n+1}$ and  $\dim _K R_{n+1}={2n \choose n+1}$,  and  
 $V=R_{n+1}$ if and only if $f_1, f_2, \ldots, f_n$ is 
a complete intersection.)
\end{enumerate}
\end{remark}

\begin{remark}  
Suppose that $\dim Q(F)=n$ and $Q(F)=\langle f_1, f_2, \ldots , f_n   \rangle$.
It is easy to see that the following conditions are equivalent.
\begin{enumerate}
\item[(1)]
$f_1, f_2, \ldots, f_n$ is a regular sequence.

\item[(2)]
$R_{n-1}f_1 +  R_{n-1}f_2 +  \cdots + R_{n-1} f_n = R_{n+1}$. 

\item[(3)]

The initial ideal of  $(f_1, f_2, \ldots, f_n)$   contains all high powers of the variables.

\item[(4)]
The resultant of $f_1, f_2, \ldots, f_n$ does not vanish. (For the theory of resultants see \cite{GKZ}. 
There is a related result in  \cite{harima_wachi_watanabe_2}.)   
\end{enumerate}
\end{remark}

\begin{remark}  
  Tony Iarrobino pointed that Thoerem~\ref{main_theorem} can be generalized to any homogeneous polynomial
  $F\in R_{n(d-1)}$ to define a complete intersection with generators of any uniform degree $d$. 
  We confined ourselves to the quadratic case ($d=2$),  since the generalization is straightforward, and since we had in mind  the results of 
  \cite{harima_wachi_watanabe_1} and \cite{harima_wachi_watanabe_2}. 
  \end{remark}

\begin{example}   
Let $R=K[x,y,z]$. Consider $F \in R_3$.  If the partials $F_x, F_y, F_z$ are linearly independent, 
then the Hilbert series for $A(F)$ is $(1+T)^3$, in which case $\dim _KQ(F) =3$. 
So $A(F)$ is a quadratic complete intersection for most of $F \in R_3$.  Following are exceptional cases.       
These examples are due to Buczy\'{n}ska et al\@. \cite{teitler}. 
\begin{enumerate}
\item[(1)]
 $F=-x^3+y^2z$.  
It is easy to see that  $Q(F) \supset \langle z^2, xz \rangle$, and that these are not a regular sequence.   
So we may conclude $Q(F)$ cannot be a complete intersection.
The fact is that $\Ann _R(F)$ is a 5 generated Gorenstein ideal: 
\[\Ann _R(F)=(z^2, xz,xy, y^3, x^3+3y^2z).\]

\item[(2)]
  $F=x^2y+y^2z$.  We can apply the same argument as above to see that $\Ann _R(F)$ is not a complete intersection.
\[\Ann _R(F)=(z^2, xz, x^2-yz, y^3, xy^2).\] 
\end{enumerate}
\bigskip

The classification of ternary cubics is known over the complex number field.  
The  complete sets of orbits in the parameter space $\CC \PP ^9$ for the ternary cubics by the general linear group ${\rm GL}(3, \CC)$ 
can be found in \cite{teitler} Section 2.  
\end{example}


\begin{example}    

Let $R=K[w,x,y,z]$. 
\begin{enumerate}

\item[(1)]
Consider $F=(w-x)(y-z)(w^2+x^2+y^2+z^2)$.  Then we have:
\[Q(F)=\langle wx-yz,  y^2+ 4yz + z^2, w^2 + 4yz + x^2, (w+x)(y+z)  \rangle\]
With an aid of a computer algebra system, it is easy to see that the ideal 
generated by these elements is a complete intersection.  Hence $A(F)$ is a quadratic complete intersection.

\item[(2)]
Consider $F=(w-x)(y-z)(w+x+y+z)^2$.  Then we have:
\[F_w+F_x-F_y-F_z=0.\]
This shows that $\Ann _R(F)$ contains a linear form. $A(F)$ is not a quadratic complete intersection 
but is a complete intersection with embedding dimension three.  
In fact 
\[\Ann _R(F)=(w+x-y-z, x(x-y-z)+yz, (y+z)^2, z^2(3y-z). )\]

\item[(3)]
 $F=(wx)^2-(yz)^2$. 
It is easy to  see that $Q(F)=\langle  wy,wz, xy,xz   \rangle$. 
So this is not a quadratic complete intersection. 
It happens that the Hilbert series is $(1+T)^4$. 
However, 
\[\Ann _R(F)=(wy,wz, xy,xz, w^3, x^3, y^3, z^3, (wx)^2+(yz)^2     ). \] 
\end{enumerate}
\end{example}


\begin{example}  
Let $R=K[v,w,x,y,z]$.
\begin{enumerate}

\item[(1)]
$F=vwxyz+wxyz^2$. It is easy to see that $\Ann _R(F)$ contains 5 quadratic relations. 
In fact $(v^2, w^2, x^2, y^2, z^2- 2vz ) \subset \Ann _R(F)$.
So $\Ann _R(F)$ is a complete intersection. 
\item[(2)]
$F=vwxyz+xyz^3$. $\Ann _R(F)=(v^2, w^2, x^2, y^2,z^2- 6vw )$.  
Similarly to the previous example, this is a complete intersection.  
\item[(3)]
$F=vwxyz+yz^4$. 
It is easy to see that we have the relations 
\[\left(\frac{\pa}{\pa v}\right)^2F=\left(\frac{\pa}{\pa w}\right)^2F=\left(\frac{\pa}{\pa x}\right)^2 F
=\left(\frac{\pa}{\pa y}\right)^2 F=0.\]

With a little contemplation we see that no more quadratic relations are possible. 
So this is not a complete intersection. 
In fact we can compute 
$\Ann _R(F)=(v^2, w^2, x^2, y^2, wz^2, vz^2, xz^2, z^3- 24vwx)$. 
\end{enumerate}
\end{example}

\begin{problem}
For what binomial $F \in K[x_1, x_2, \ldots, x_n]_n$ is $\Ann _R(F)$ a complete intersection? 
(Define $F$ to be a binomial by $F=\al M + \be N$, where $M, N$ are
power products of variables and $\al, \be \in K$.)  
\end{problem}



\begin{remark}
  The vector  space $R_n$ may be regarded as the  parameter
  variety for the Gorenstein algebras of socle degree $n$ with embedding dimension at most $n$.
  Thus the projective space $\PP ^N$   where  $N={2n-1 \choose n}-1$
  is the  paremeter space for such Gorenstein algebras.  By the Double Annihilator Theorem of Macaulay, 
  each orbit of the general linear group $GL(n)$ contains precisely one isomorphism type of Gorenstein algebras.
  (See \cite{smith_1002}.)
  Thus the dimension of the parameter space for the isomorphism types of Gorenstein algebras with
  socle degree  $n$  and    embedding dimension at most $n$ is
  \[\left({2n-1 \choose n}-1\right)- (n^2-1).\]
  (We roughly estimated that each orbit is $(n^2-1)$-dimensional.) 

  On the other hand, the set of quadratic complete intersections may be parametrized by the $n$-dimensional subspaces
  in $R_2$. Thus the  dimension of the parameter space is $n \times {n \choose 2}$.
  The linear transformation of the variables gives us an isomorphism of such complete intersections.
  Hence the dimension of the parameter space for the isomorphism types is
  \[n\times {n \choose 2} - (n^2-1).\]

  Here is a list of these dimensions for small values of $n$.

  \medskip

  \begin{center}
  $
  \begin{array}{c|ccccc} \hline 
    n                       & 2   & 3     & 4  &  5  &  6     \\ \hline 
    {2n-1 \choose n} - n^2   & -   & 1     & 19 & 101  &  426  \\ \hline
    n{n \choose 2} - n^2+1   & -   & 1     & 9  & 26   &  55   \\ \hline 
    \end{array}
  $

\end{center}
\end{remark}

\medskip
\medskip
\begin{remark}
  Suppose $F \in R_n$ defines a quadratic complete intersection. We may assume that the square free monomials
  are linearly independent in $A(F)$. (For this fact see \cite{harima_wachi_watanabe_2}.)  Let
  $B_k \subset R_k$ be the set of square free monomials of degree $k$ and define 
  the ${n \choose k} \times {n \choose k}$ matrix $H_k(F)$ as follows:
   \[H_k(F)= \left((\al\be) \circ F\right)_{(\al, \be) \in B_k \times B_k}.\]
   The rows and columns of  $H_k(F)$ are indexed by $B_k$.
   In \cite{maeno_watanabe} the authors call the determinant of  $H_k(F)$ the higher Hessian of order $k$ of $F$
   (with respect to the basis $B_k$). By \cite{watanabe_1} Theorem~4 the algebra
   $A(F)$ has the strong Lefschetz property if and only if
   \[\det H_k(F) \neq 0, \mbox{ for all } k=1,2, \ldots, [n/2].\]   
   We conjecture that  $\det H_k(F)$ does not vanish for $F \in R_n$ for all $k$,
   if $F$ defines a quadratic complete intersection.  
   This is a part of a larger conjecture which claims that all complete intersections over a field of characteristic zero
   have the strong Lefschetz property. For more detail see \cite{HMMNWW} Conjecture~3.46 and Theorem~3.76.  
\end{remark}

In the next example we show that there exists a Gorenstein algebra with the same Hilbert series
as a quadratic complete intersection, but fails the SLP. 
\begin{example}[R.\ Gondim]   
  Consider the polynomial 
  \[F=v^3wx+vw^3y+y^2z^3\]
  of degree  5  in 5 variables. 
  With an aid of a computer algebra system one sees that $A(F)$ has the Hilbert series  
  $(1+T)^5$, but $A(F)$ is not a complete intersection. 
  It is not difficult to see that the 2nd Hessian of $F$ with respect to certain bases  for $A_2$ and $A_3$ is identically zero,
  so the algebra $A(F)$ fails the SLP. 
  The set of square free monomials of degree 2 is linearly dependent in $A(F)_2$, but this is not essential to the failure of 
  the SLP. In fact if $F$ is expressed in generic variables, the sets of square-free monomials can be  bases for $A_2$ and $A_3$. 
This example was constructed by Gondim~\cite{gondim}. (See \cite{gondim}, Theorem~2.3 and the paragraph preceding it.)
\end{example}

\section{The Macaulay dual generator for a subring of a Gorenstein algebra}
In this section we consider Artinian algebras   $A$   over a field $K$ with the assumption 
characteristic $K$ is zero or greater than the socle degree of $A$. 
The socle degree and the embedding dimension of $A$ are arbitrary.  

\begin{theorem} \label{maeno's observation} 
Suppose that $A=\bigoplus _{i=0}^d A_i$ is a standard graded Artinian Gorenstein algebra with $A_d \neq 0$. 
Assume that $A_0=K$ is a field of characteristic $p =0 $ or $p > d$. 
Let  $\overline{x_1}, \overline{x_2}, \ldots, \overline{x_n} \in A_1$ be 
the images of the variables of the polynomial ring and  
let $\xi_1, \xi_2, \ldots, \xi _n \in K$. 
Fix a nonzero socle element $s \in A_d$. Define the map  
\[\Phi: K ^n \to K\]
which sends $(\xi_ 1, \xi _2, \ldots, \xi _n )$ to $c$, 
where $c$ is defined by 
\[(\xi _1\overline{x_1} + \xi _2\overline{x_2} + \cdots + \xi _n\overline{x_n})^d=cs.\]
Since $c$ is a function of $\xi _1, \xi _2, \ldots, \xi_n$, we may write 
$c=\Phi(\xi_1, \xi_2, \ldots, \xi _n)$. The map $\Phi$ is a polynomial map and 
$\Phi(\xi_1, \xi_2, \ldots, \xi _n)$,   
as a polynomial, is a Macaulay dual generator for the Gorenstein algebra $A$.     
\end{theorem}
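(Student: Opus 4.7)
The plan is to exhibit $\Phi$ as a concrete homogeneous polynomial of degree $d$ in the $\xi_i$'s, compute its derivatives with respect to any monomial $x^b$, and see that the answer encodes the multiplication map of $A$. Gorenstein duality will then force $\Ann_R(\Phi)$ to coincide with the defining ideal of $A$.

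First I would apply the multinomial theorem to $\ell_\xi := \sum_i \xi_i \overline{x_i}$ and use that $\dim_K A_d = 1$ to write $\overline{x}^a = c_a s$ for each multi-index $a$ with $|a|=d$. This gives the explicit form
\[\Phi(\xi) = \sum_{|a|=d} \binom{d}{a}\, c_a\, \xi^a \in K[\xi_1, \ldots, \xi_n].\]
Next, for a multi-index $b$ of weight $i \le d$, a short manipulation using $\binom{d}{a}\tfrac{a!}{(a-b)!} = \tfrac{d!}{(a-b)!}$ and the substitution $c = a-b$ collapses $x^b \circ \Phi$ into
\[x^b \circ \Phi(\xi) \;=\; \frac{d!}{(d-i)!}\; \pi_s\!\bigl(\overline{x}^b \cdot \ell_\xi^{\,d-i}\bigr),\]
where $\pi_s : A_d \to K$ is the linear functional extracting the coefficient of $s$. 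By $K$-linearity the same formula holds with $\overline{f}$ in place of $\overline{x}^b$ for arbitrary $f \in R_i$. The hypothesis $p=0$ or $p>d$ ensures the scalar $d!/(d-i)!$ is nonzero.

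Now I would invoke Gorenstein duality. The identity above shows that for $f \in R_i$, $f \circ \Phi = 0$ iff $\pi_s(\overline{f}\cdot \ell_\xi^{\,d-i}) = 0$ identically in $\xi$; expanding $\ell_\xi^{\,d-i}$ again by the multinomial theorem, this is equivalent to $\overline{f}\cdot \overline{x}^c = 0$ in $A_d$ for every multi-index $c$ with $|c|=d-i$, i.e.\ to $\overline{f}\cdot A_{d-i} = 0$. Because $A$ is Gorenstein, the multiplication pairing $A_i \times A_{d-i} \to A_d \cong K$ is perfect, so this forces $\overline{f}=0$ in $A$. A decomposition into homogeneous parts extends the conclusion to any $f \in R$ (using that $\Phi$ is homogeneous of degree $d$ and $A_j = 0$ for $j > d$). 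Hence $\Ann_R(\Phi)$ equals the defining ideal of $A$, proving that $\Phi$ is a Macaulay dual generator.

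I do not expect a conceptual obstacle; the main care is in the bookkeeping of the multinomial coefficients that lets one factor $\overline{x}^b$ out of the differentiated sum and recognize the remainder as $\ell_\xi^{\,d-i}$. The characteristic hypothesis enters only to ensure these combinatorial coefficients do not vanish.
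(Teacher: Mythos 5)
Your proposal is correct, and it takes a genuinely different route from the paper. The paper starts from a Macaulay dual generator $F$ of $A$, whose existence it takes from Macaulay's double annihilator theorem; it then picks a preimage $S$ of the socle element, sets $\al = S\circ F$, and uses the identity $(\sum\xi_i x_i)^d\circ F = d!\,F(\xi_1,\dots,\xi_n)$ (its Lemma on $D^dF$) to conclude $\Phi = \frac{d!}{\al}F$, i.e.\ $\Phi$ is a nonzero scalar multiple of a known dual generator. You instead write $\Phi$ out explicitly as $\sum_{|a|=d}\binom{d}{a}c_a\xi^a$, compute $f\circ\Phi = \frac{d!}{(d-i)!}\pi_s(\overline{f}\,\ell_\xi^{\,d-i})$ for $f\in R_i$, and deduce $\Ann_R(\Phi)=I$ directly from the perfectness of the pairing $A_i\times A_{d-i}\to A_d$. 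The multinomial bookkeeping you do is essentially a refinement of the paper's Lemma (your identity specializes to it when $b=0$), but your argument never presupposes that a dual generator exists: it proves from scratch that $\Phi$ has the right annihilator, so it simultaneously reproves the existence half of Macaulay duality for graded Gorenstein algebras (at the cost of invoking Poincar\'e duality of $A$, which is where the Gorenstein hypothesis enters for you). The paper's route is shorter and has the side benefit of exhibiting the exact normalization $\Phi=\frac{d!}{\al}F$ relative to any chosen $F$ and socle preimage $S$; yours is more self-contained. Both uses of the hypothesis $p=0$ or $p>d$ are the same in substance: the nonvanishing of the factorials $d!/(d-i)!$ and of the multinomial coefficients.
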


\medskip  
This was proved in \cite{HMMNWW} Lemma~3.47. Here we give another proof.
\begin{proof} 
Let $R=K[x_1, x_2 , \ldots, x_n]$ be the polynomial ring 
and $F \in R_d$ a Macaulay dual generator for $A$.  
 Then  we have the isomorphism 
$R/ \Ann _R(F) \cong A$ defined by $x_i \mapsto \overline{x_i}$. 
Let $S=S(x_1, \ldots, x_n) \in R_d$ be a pre-image of a nonzero socle element $s \in A_d$.  
Put $\al= S\circ F \in K$. Since $S \not \in \Ann _R(F)$, we have $\al \neq 0$.  
Given $(\xi _1, \xi _2, \ldots, \xi _n) \in K^n$, 
we want to find $c=c(\xi _1, \ldots, \xi _n) \in K$ which satisfies 
\[cs = (\xi _1\overline{x_1} + \xi _2\overline{x_2} + \cdots + \xi _n\overline{x_n})^d.\]
Such $c$ should satisfy   
$(\xi _1 x_1 + \xi_2 x_2 + \cdots + \xi _n x_n)^d - cS \in \Ann _R(F)$.
Thus we should have 
\[((\xi _1 x_1 + \xi _2 x_2 + \cdots + \xi _n x_n)^d - cS )\circ F=0.\]
 We compute the left hand side as follows:
\[
\begin{array}{ll}
((\xi _1 x_1 + \xi _2 x_2 &  + \cdots + \xi _nx_n)^d - cS )\circ F   
 \\ \qquad &  =  (\xi _1 x_1 + \xi_2 x_2 + \cdots + \xi _n x_n)^d \circ F -cS \circ F     
 \\ \qquad &  = d!F(\xi _1, \ldots, \xi _n) - c \al 
\end{array}
\]
We used Lemma~\ref{watanabe_queen} which we prove below for the last equality.
It turned out that   
\[c=\frac{d!}{\al}F(\xi _1, \ldots, \xi _n).\]
Hence we have 
\[
\Phi(x_1,x_2,\ldots,x_n)=\frac{d!}{\al}F(x_1,x_2,\ldots,x_n). 
\]
\end{proof}


\begin{lemma}  \label{watanabe_queen}  
Let $R=K[x_1, x_2, \ldots, x_n]$ be the polynomial ring and  
let 
\[\xi_1, \xi_2, \ldots, \xi_n \in K\]
be any elements in $K$. Put 
\[D=\xi_1 \frac{\pa}{\pa x_1} + \xi_2 \frac{\pa}{\pa x_2} +  \cdots  + \xi_n \frac{\pa}{\pa x_n}.\]
Then for any homogeneous polynomial $F \in R_d$ of degree $d$, we have 
\[D^d F=d!F(\xi _1,  \xi _2, \ldots, \xi _n).\]
\end{lemma}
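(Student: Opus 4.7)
The plan is to prove the identity by linearity in $F$, reducing to the case of a single monomial, and then evaluating both sides via the multinomial theorem.

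First I would observe that both sides of the proposed identity are $K$-linear in $F$: the left-hand side because the operator $D^d$ is $K$-linear on $R$, and the right-hand side because evaluation at the point $(\xi_1,\ldots,\xi_n)$ is $K$-linear. Since the monomials $x^\alpha = x_1^{\alpha_1}\cdots x_n^{\alpha_n}$ with $|\alpha|=\alpha_1+\cdots+\alpha_n = d$ form a $K$-basis of $R_d$, it suffices to verify
\[
D^d\,x^\alpha \;=\; d!\,\xi^\alpha
\]
for every such multi-index $\alpha$, where $\xi^\alpha := \xi_1^{\alpha_1}\cdots \xi_n^{\alpha_n}$.

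Next I would use the multinomial theorem. Because the scaled partial derivatives $\xi_i\,\partial/\partial x_i$ pairwise commute, we have
\[
D^d \;=\; \sum_{|\beta|=d}\frac{d!}{\beta!}\,\xi^\beta\,\partial^\beta,
\]
where $\beta!=\beta_1!\cdots\beta_n!$ and $\partial^\beta=(\partial/\partial x_1)^{\beta_1}\cdots(\partial/\partial x_n)^{\beta_n}$. Applied to $x^\alpha$ with $|\alpha|=d$, the only surviving summand is $\beta=\alpha$, since $\partial^\beta x^\alpha=0$ unless $\beta \le \alpha$ componentwise, and equality of total degrees forces $\beta=\alpha$. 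For that term $\partial^\alpha x^\alpha = \alpha!$, so
\[
D^d\,x^\alpha \;=\; \frac{d!}{\alpha!}\,\xi^\alpha\cdot\alpha! \;=\; d!\,\xi^\alpha,
\]
which is exactly $d!\,x^\alpha(\xi_1,\ldots,\xi_n)$. Summing over a monomial expansion of $F$ gives the stated identity.

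I do not expect any genuine obstacle; the only point deserving care is the combinatorial bookkeeping in the multinomial expansion and the observation that $|\beta|=|\alpha|=d$ together with $\beta\le\alpha$ forces $\beta=\alpha$. The characteristic hypothesis is used implicitly in that $d!$ is invertible (or at least nonzero) in $K$, but for the identity itself all that is needed is that the factorials appearing are well-defined integers, which they always are.
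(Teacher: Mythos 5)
Your proof is correct and follows essentially the same route as the paper's: expand $D^d$ by the multinomial theorem, note that on a degree-$d$ monomial only the matching multi-index term survives, and cancel the factorials. The reduction to monomials by linearity is made explicit in your write-up and only implicit in the paper's (which sums over the monomial expansion of $F$ directly), but the argument is the same.
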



\begin{proof}
Let $F=\sum_{d_1+ \cdots +d_n=d} a_{(d_1,\ldots,d_n)} x_1^{d_1} \cdots x_n^{d_n}$. 
Then: 
\[
\begin{array}{rcl}
D^dF &=& (\xi_1 \frac{\partial}{\partial x_1} + \cdots + \xi_n \frac{\partial}{\partial x_n})^d F \\[1ex] 
&=& {\displaystyle \left(\sum_{d_1+ \cdots +d_n=d} \frac{d!}{d_1! \cdots d_n!}  
  \xi_1^{d_1}(\frac{\partial}{\partial x_1})^{d_1} \cdots \xi_n^{d_n}(\frac{\partial}{\partial x_n})^{d_n} \right) F} \\[1ex]
&=& 
{\displaystyle \sum_{d_1+\cdots+d_n=d} \frac{d!}{d_1! \cdots d_n!} 
\left(\xi_1^{d_1}(\frac{\partial}{\partial x_1})^{d_1} \cdots \xi_n^{d_n}(\frac{\partial}{\partial x_n})^{d_n} \right)
\left(a_{(d_1,\ldots,d_n)} x_1^{d_1} \cdots x_n^{d_n}\right) }   \\[1ex] 
&=&
{\displaystyle \sum_{d_1+\cdots+d_n=d} \frac{d!}{d_1! \cdots d_n!} a_{(d_1,\ldots,d_n)} 
\xi_1^{d_1} \cdots \xi_n^{d_n} d_1! \cdots d_n! }\\[1ex] 
&=& d!F(\xi_1,\ldots,\xi_n).
\end{array}
\]

\end{proof}

\medskip



\begin{definition}
Suppose that $F=F(x_1, \ldots, x_n)$ is a polynomial in $x_1, \ldots, x_n$ and  
$G=G(y_1, \ldots, y_m)$ is a polynomial in $y_1, \ldots, y_m$. (Assume that 
the sets $\{x_1, \ldots, x_n\}$ and $\{y_1, \ldots, y_m\}$ are independent sets of variables.)
We will say that $G$ is obtained from $F$ {\bf by substitution by linear forms}, if there exists a full rank 
$m \times n$ matrix $M=(m_{ij})$ such that  
\[F(x_1, x_2, \ldots, x_n)=G(y_1, y_2, \ldots, y_m), \]
if we make a substitution:     
\[\begin{pmatrix} x _1 &  x _2 & \cdots  & x _n  \end{pmatrix}
=
\begin{pmatrix} y _1 &  y _2 &  \cdots & y _m  \end{pmatrix}M.\]
\end{definition}

\begin{theorem}  \label{main_thm_2}  
Suppose that $A=\bigoplus _{i=0}^d A_i$ is a standard graded  Artinian  Gorenstein algebra over $A_0=K$, a field,  with socle degree $d$.  
Assume that the characteristic of $K$ is zero or greater than $d$.   
Suppose that $B=\bigoplus _{i=0}^d B_i$ is a Gorenstein subalgebra of $A=\bigoplus  _{i=0}^d A_i$ with the same socle degree. 
(We assume that $B=K[B_1]$, $A=K[A_1]$, and $B_1 \subset A_1$.)   
Then a Macaulay dual generator for $B$ 
is obtained from that of $A$ by substitution by linear forms. 
\end{theorem}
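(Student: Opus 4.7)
The plan is to apply Theorem~\ref{maeno's observation} to both $A$ and $B$ using a common socle element, and then use the inclusion $B_1\subset A_1$ to match the two dual generators explicitly.

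First, since both $A$ and $B$ are standard graded Artinian Gorenstein with socle degree $d$, each socle equals the one-dimensional top graded piece. From $B_d\subset A_d$ together with $\dim_KA_d=\dim_KB_d=1$, we conclude $A_d=B_d$, so a single nonzero element $s\in A_d$ serves as a socle element for both algebras. Choose a basis $\bar{x}_1,\ldots,\bar{x}_n$ of $A_1$ and a basis $\bar{y}_1,\ldots,\bar{y}_m$ of $B_1\subset A_1$, and write $\bar{y}_j=\sum_{i=1}^n c_{ij}\bar{x}_i$ for scalars $c_{ij}\in K$. Applying Theorem~\ref{maeno's observation} to $A$ and to $B$, each with the same socle element $s$, yields Macaulay dual generators $\Phi_A\in K[x_1,\ldots,x_n]_d$ and $\Phi_B\in K[y_1,\ldots,y_m]_d$ characterized by $\bigl(\sum_i\xi_i\bar{x}_i\bigr)^d=\Phi_A(\xi_1,\ldots,\xi_n)\,s$ and $\bigl(\sum_j\eta_j\bar{y}_j\bigr)^d=\Phi_B(\eta_1,\ldots,\eta_m)\,s$.

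Next, substitute the relations into the second identity. Since $\sum_j\eta_j\bar{y}_j=\sum_i\bigl(\sum_j c_{ij}\eta_j\bigr)\bar{x}_i$, raising to the $d$-th power and comparing with the first identity gives
\[
\Phi_B(\eta_1,\ldots,\eta_m)\,s=\Phi_A\Bigl(\sum_jc_{1j}\eta_j,\ldots,\sum_jc_{nj}\eta_j\Bigr)\,s.
\]
Read as a polynomial identity, this says that $\Phi_A$ becomes $\Phi_B$ under the linear substitution $(x_1,\ldots,x_n)=(y_1,\ldots,y_m)M$, where $M$ is the $m\times n$ matrix with entry $M_{ji}=c_{ij}$. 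The $j$-th row of $M$ is the coordinate vector of $\bar{y}_j$ in the basis $\bar{x}_1,\ldots,\bar{x}_n$, so linear independence of $\bar{y}_1,\ldots,\bar{y}_m$ inside $A_1$ forces these rows to be linearly independent and $M$ to have full row rank $m$. This is exactly the definition of substitution by linear forms.

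The argument is largely mechanical once Theorem~\ref{maeno's observation} is available; the only subtlety worth flagging is the legitimacy of using a common socle element $s$ for both applications of that theorem, which rests on the observation $A_d=B_d$. No serious obstacle is anticipated.
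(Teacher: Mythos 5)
Your proposal is correct and follows essentially the same route as the paper: apply Theorem~\ref{maeno's observation} to both $A$ and $B$ with a common socle element, expand $\bigl(\sum_j\eta_j\bar{y}_j\bigr)^d$ using $\bar{y}_j=\sum_i c_{ij}\bar{x}_i$, and read off the linear substitution. You add two small justifications the paper leaves implicit (why $A_d=B_d$, and why the substitution matrix has full rank), which is fine.
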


\begin{proof}  
We have shown that $\Phi=\Phi(\xi _1, \ldots, \xi _n)$ defined in Theorem 11 
is a Macaulay dual generator for $A$. 
Likewise we let 
$\Phi'=\Phi'(\eta _1, \ldots, \eta _m)$ be a Macaulay dual  generator for $B$. 
Let $s$ be a nonzero socle element of $A$. 
We may assume that 
$A_d=B_d=\langle s  \rangle$.  
Let $M=(m_{ij})$ be the $m\times n$ matrix which satisfies 
\[
\left(
\begin{array}{c}
 y_1 \\ y_2 \\ \vdots  \\ y_m 
\end{array}
\right)
=(m_{ij})
\left(
\begin{array}{c}
 x_1 \\ x_2 \\ \vdots  \\ x_n 
\end{array}
\right)
,\]
where $\langle x_1, x_2, \cdots, x_n   \rangle$ is a basis for $A_1$ and 
$\langle y_1, y_2, \cdots, y_m  \rangle$ for $B_1$.
Then, since 
\[
\begin{array}{rcl}
(\eta _1 y_1 + \eta _2 y_2  + \cdots + \eta _m y_m)^d 
&=& \left(\sum _{i=1}^m \eta _i (\sum _{j=1}^n m_{ij}x_j)\right)^d  \\[1ex] 
&=& \left(\sum _{j=1}^n(\sum _{i=1} ^m \eta _i m_{ij})x_j\right)^d,  \\[1ex] 
\end{array}
\]
we have 
\[
\Phi'(\eta_1,\ldots,\eta_m) 
= \Phi(\sum_{i=1}^m\eta_im_{i1}, \ldots, \sum_{i=1}^m\eta_im_{in}).
\]
This shows that $\Phi '$ is obtained from $\Phi$ by linear substitution of the 
variables with the matrix $(m_{ij})$: 
\[
(\xi _1 \ \xi _2  \ \cdots  \ \xi _n )
=(\eta _1 \ \eta _2 \ \cdots \ \eta _m) (m_{ij}). 
\]
\end{proof}

\begin{example}\normalfont 
Let  $R$ be the polynomial ring  in $n$ variables. 
Put $S_n$ be the symmetric group acting on $R$ by permutation of the variables. 
Let $I=(f_1, f_2, \ldots, f_n)$ be a quadratic complete intersection such that 
\[\Ann _R(F)=(f_1, \ldots, f_n).\] 
So $F$ is a Macaulay dual generator of $R/(f_1, \ldots , f_n)$.   
Suppose that $F^{\sigma }=F$ for all $\sigma \in S_n$. 
Let $\G$ be a Young subgroup of $S_n$ such that 
\[\G=S_{n_1}\times S_{n_2} \times \cdots \times S_{n_r},\]
\[n_1 + n_2 + \cdots + n_r=n.\]
Then the ring of invariants  $B=A^{\G} \subset A$ is a complete intersection and in many cases 
$A^{\G}$ is generated by linear forms (see \cite{harima_wachi_watanabe_1}).  
When  this is the case, the generators can be chosen as follows:  
\[y_1=x _1 + x _2 + \cdots + x _{n_1},\]
\[y_2={\underbrace{   x _{n_1+1} + x _{n_1+2} + \cdots + x _{n_1+n_2}   }_{n_2}   },\]
\[\vdots \]
\[y_r=\underbrace{    x _{   n_1 + \cdots + n_{r-1} + 1   } + x _{   n_1 + \cdots + n_{r-1} + 2   } + \cdots + x _n   }_{n_r}.\]

Then a Macaulay dual generator  $G$ for $B$ is obtained as follows:
\[G=F(\underbrace{y _1, \ldots, y _1}_{n_1}, 
\underbrace{y _2 ,\dots, y _2}_{n_2}, \cdots, \underbrace{y _r \ldots, y_r}_{n_r}).\]
\end{example}

\begin{example} 
Let $R=K[u,v,w,x,y,z]$ be the polynomial ring in 6 variables.
Put 
\[f_1=u^2-2u(v+w+x+y+z),\]
\[f_2=v^2-2v(u+w+x+y+z),\]
\[f_3=w^2-2w(u+v+x+y+z),\]
\[f_4=x^2-2x(u+v+w+y+z),\]
\[f_5=y^2-2y(u+v+w+x+z),\]
\[f_6=z^2-2z(u+v+w+x+y).\]
Let $A=R/(f_1, \cdots, f_6)$.
Then $A$ is an Artinian complete intersection.
A Macaulay dual generator is given as follows:
\begin{align*}
F=80m_6 &+48m_{51}+120m_{42}-30m_{411}+160m_{33}-60m_{321}+ \\ 
&\quad 60m_{3111}-90m_{222}+90m_{2211}-225m_{21111}+1575m_{111111}, 
\end{align*}
where $m_{ijk}$ etc.\ denotes the monomial symmetric polynomial. For
example,
\[m_6=u^6+v^6+w^6+x^6+y^6+z^6,\]
\[m_{51}=u^5v+uv^5+ \cdots +wz^5+xz^5+yz^5,\]
\[m_{411}=u^4vw + \cdots + xyz^4,\] 
\[\vdots\]
\[m_{111111}=uvwxuz.\]
The polynomial $F$ was obtained by solving a system of linear equations in 462
variables by Mathematica. (462 is the dimension of $K[u, \cdots, z]_6$.) 
The polynomial $F$  looks  complicated, but it has the  striking
property that any substitution of variables by another set of variables
defines a complete intersection. 
For example \[F(p,p,q,q,r,s) \in K[p,q,r,s]_6\] is a complete intersection in 4 variables.
This corresponds to the ring $A^{\G}$ of invariants by the Young subgroup 
\[\G:=S_{2}\times S_{2} \times S_{1} \times S_{1}.\]
\end{example}


\section{Subalgebras of Gorenstein algebras generated by linear forms} 

\begin{theorem} \label{second_main_thm}
Let $A=\bigoplus _{i=0}^d A_i$ be an Artinian Gorenstein algebra.  
Let $B$ be a subring of $A$ generated by a subspace of $A_1$ such that $B_d=A_d \neq 0$. 
Then 
we have the following:
\begin{enumerate}
\item[{\rm (1)}]  
There exists an irreducible ideal $\gb$ of  $B$ such that $B/\gb$ is a Gorenstein 
algebra with the socle degree $d$. 
\item[{\rm (2)}]
A Macaulay dual generator of $B'=B/\gb$ is obtained from that of $A$ by a substitution by 
linear forms. 
\end{enumerate}
\end{theorem}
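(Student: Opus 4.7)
The plan is to construct $\gb$ as the kernel in $B$ of a surjection onto an Artinian Gorenstein algebra $\tilde B$ whose Macaulay dual $G$ is obtained from a Macaulay dual $F$ of $A$ by substitution of linear forms.

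First I would set up the data. Fix a Macaulay dual $F\in R_d$ for $A$, where $R=K[x_1,\ldots,x_n]$ and $A\cong R/\Ann_R(F)$. Choose a basis $\overline{y_1},\ldots,\overline{y_m}$ of $B_1\subseteq A_1$ and write $\overline{y_i}=\sum_j m_{ij}\overline{x_j}$, collecting the coefficients into an $m\times n$ matrix $M=(m_{ij})$ of rank $m$. Let $S=K[y_1,\ldots,y_m]$ and $\pi\colon S\twoheadrightarrow B$ the surjection $y_i\mapsto\overline{y_i}$, with kernel $J$. Define
\[
G(y_1,\ldots,y_m) \;:=\; F\!\left(\sum_{i=1}^m m_{i1}y_i,\;\ldots\;,\sum_{i=1}^m m_{in}y_i\right) \in S_d,
\]
so $G$ is obtained from $F$ by linear substitution via $M$. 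Set $\tilde B:=S/\Ann_S(G)$, an Artinian Gorenstein algebra of socle degree $d$ with Macaulay dual $G$ by the double annihilator theorem.

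The heart of the argument is the inclusion $J\subseteq\Ann_S(G)$, which yields a factorization $B\twoheadrightarrow\tilde B$. I would start from Theorem~\ref{maeno's observation} applied to $A$: after suitable normalization of $F$ and for a fixed socle generator $s\in A_d$,
\[
(\xi_1\overline{x_1}+\cdots+\xi_n\overline{x_n})^d \;=\; F(\xi_1,\ldots,\xi_n)\,s \quad\text{in }A[\xi_1,\ldots,\xi_n].
\]
Substituting $\xi_j=\sum_i\eta_i m_{ij}$ and using $\overline{y_i}=\sum_j m_{ij}\overline{x_j}$ converts this to
\[
(\eta_1\overline{y_1}+\cdots+\eta_m\overline{y_m})^d \;=\; G(\eta_1,\ldots,\eta_m)\,s \quad\text{in }A[\eta_1,\ldots,\eta_m],
\]
which also holds inside $B[\eta_1,\ldots,\eta_m]$ since $\overline{y_i}\in B$ and $s\in B_d$. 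For homogeneous $f\in S_e$, applying the operator $f(\pa/\pa\eta_1,\ldots,\pa/\pa\eta_m)$ to both sides together with the elementary identity
\[
f\!\left(\frac{\pa}{\pa\eta_1},\ldots,\frac{\pa}{\pa\eta_m}\right)\!\left(\sum_i\eta_i\overline{y_i}\right)^d \;=\; \frac{d!}{(d-e)!}\,f(\overline{y_1},\ldots,\overline{y_m})\!\left(\sum_i\eta_i\overline{y_i}\right)^{d-e}
\]
(a direct variant of Lemma~\ref{watanabe_queen} treating the $\overline{y_i}$ as scalars in $\eta$) yields
\[
\frac{d!}{(d-e)!}\,f(\overline{y_1},\ldots,\overline{y_m})\!\left(\sum_i\eta_i\overline{y_i}\right)^{d-e} \;=\; (f\circ G)(\eta_1,\ldots,\eta_m)\,s.
\]
If $f\in J$, the left-hand side vanishes, and since $s$ is part of a $K$-basis of $A$, the free $K[\eta_1,\ldots,\eta_m]$-module structure of $A[\eta_1,\ldots,\eta_m]$ forces $f\circ G=0$, i.e., $f\in\Ann_S(G)$.

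Setting $\gb:=\Ann_S(G)/J\subseteq B$, we have $B/\gb\cong\tilde B$. To see $\gb\neq B$, note that $G\neq0$: since $B_d=A_d\neq0$ and $B=K[B_1]$, polarization (valid in characteristic zero or $>d$) yields $\ell^d\neq0$ for some $\ell\in B_1$, whence $G(c_1,\ldots,c_m)\neq0$ for suitable $c_i\in K$. Hence $\tilde B\neq0$ is Artinian Gorenstein of socle degree $\deg G=d$, so $\gb$ is proper and irreducible in $B$; this settles (1), and (2) is immediate from the construction of $G$. The main obstacle is the inclusion $J\subseteq\Ann_S(G)$, which rests on the polynomial-differential identity above and on comparing coefficients as elements of $A$; the rest of the argument is packaging.
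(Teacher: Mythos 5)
Your proof is correct, but it takes a genuinely different route from the paper's. The paper proves (1) abstractly: it writes the zero ideal of $B$ as an irredundant intersection $\gb_1\cap\cdots\cap\gb_r$ of irreducible ideals, embeds $B$ into $\bigoplus_i B/\gb_i$, and observes that the degree-$d$ socle element must survive in some component, which is therefore Gorenstein of socle degree $d$; for (2) it then remarks that the evaluation of $(\eta_1 y_1+\cdots+\eta_m y_m)^d$ against the socle is the same in $B$ and in $B'$, so the computation of Theorem~\ref{main_thm_2} applies verbatim. You instead build the ideal explicitly: you define $G$ as the substituted polynomial, set $\tilde B=S/\Ann_S(G)$, and establish $J\subseteq\Ann_S(G)$ by differentiating the identity $(\sum_i\eta_i\overline{y_i})^d=G(\eta)s$, which is the same identity underlying the paper's argument via Theorem~\ref{maeno's observation} and Lemma~\ref{watanabe_queen}; your differential identity, the freeness of $A[\eta_1,\ldots,\eta_m]$ over $K[\eta_1,\ldots,\eta_m]$, and the polarization argument for $G\neq 0$ are all sound (and all use the characteristic hypothesis $p=0$ or $p>d$ from the head of Section~4, as they should). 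What your version buys is an explicit identification $\gb=\Ann_S(G)/J$ and a simultaneous proof of (1) and (2) with no appeal to irreducible decompositions, plus the fact that $B/\gb\cong S/\Ann_S(G)$ is Gorenstein directly from the double annihilator theorem; what the paper's version buys is a two-line proof of (1) and a proof of (2) that is pure bookkeeping once Theorem~\ref{maeno's observation} is in place.
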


\begin{proof}
\begin{enumerate}
\item[(1)]
Let 
\[\gb _1 \cap \gb _2 \cap \cdots \cap \gb _r =0\]  
be an irredundant  decomposition of $0$ in $B$ by irreducible ideals. Then we have the injection:
\[0 \to B \to \bigoplus _{i=1}^r B/\gb _i.\]
Note that there exists an $i$, say $i=1$,  such that $B'=B/\gb _1$ has the same 
socle as $B$.

\item[(2)] 
Let $\eta _1, \eta _2, \ldots, \eta _m$ be elements of $K$.  
The evaluation of the map $(\eta _1y_1 + \eta _2 y_2 + \cdots + \eta _m y_m)^d$ at $B_d$ or 
$B'_d$ are the same. Hence the assertion follows in the same way as Theorem~\ref{main_thm_2}.
\end{enumerate}
\end{proof}

\begin{example}  
Let $K$ be a field of characteristic zero. 
Consider $A=K[x,y,z]/I$, where 
\[I=(x^2-6yz,y^2-6xz, z^2-6xy).\]
Let $S=K[r,s]$, define $\psi:S \to A$  by $r \mapsto x, s\mapsto y+az$. 
Then we have 
\[\ker \psi =(3(a^3+1)rs^2-as^3, r^2s, (a^3+1)r^3-s^3),\]  
provided that  $a \neq \pm 1$. 

On the other hand a Macaulay dual generator for $A$ is $F=F(x,y,z)=x^3+y^3+z^3+xyz$.   
Let $G=F(r,s,as)$. That is,  the polynomial $G$ is obtained from $F$ by the substitution by the linear forms:
\[
(x,y,z)=(r,s)
\begin{pmatrix}
1&0&0 \\
0&1&a 
\end{pmatrix}
\]
A primary decomposition of the ideal $I$ is given by 
$I = \Ann _S(G)\cap (r+s, s^3)$, where 
\[\Ann _S(G)= (a^2r^2+9(a^3+1)rs - 3as^2, 3(a^3+1)rs^2-a s^3).\]
If $a=1$, then $\ker \psi$ is a complete intersection:
 \[\ker \psi=(r^2+18rs-3s^2, 6rs^2-s^3).\]
The case  $a=0$ works as well as other general cases.
The computation was done with the computer algebra system Macaulay2~\cite{grayson_stillman}.      
\end{example}

\appendix

\section{Appendix:Divided power algebra and the injective hull}

Let $R=K[x_1, x_2, \ldots, x_n]$ be the polynomial ring over a field $K$ of any characteristic.   
It is easy to see that 
\[\Hom _K(R,K)  = \prod _{i=0}^{\infty} \Hom _K(R_i, K)\]
is an injective $R$-module.  

In the category of finitely generated $R$-modules,  
we may adopt 
\[\Gamma= {\rm gr}.\Hom(R,K) = \bigoplus _{i=0}^{\infty} \Hom _K(R_i, K)\]
as the injective hull of the residue field of $K=R/R _{+}$. 
It is possible to endow the vector space $\Gamma$ a structure of a commutative 
algebra, called the divided power algebra. 
This can be explained as follows:
For the $K$ basis of $\Gamma$, we take the dual base of $R_i$, i.e, the ``monomials'' 
\[X_1 ^{(i_1)}X_2^{(i_2)}\cdots X_n^{(i_n)},\]
which are regarded as a homomorphism $R_{i_1 + \cdots + i_n} \to K$ defined by  
\[X_1^{(i_1)}X_2^{(i_2)}\cdots X_n^{(i_n)}\circ x_1^{j_1}x_2^{j_2} \cdots x_n^{j_n}=\left\{\begin{array}{l} 1, \; \mbox{if $I=J$,}\\ 0, \; \mbox{otherwise}. \end{array}\right. \]
($I$ and $J$ are multi-indices.) 
The multiplication among the monomials $\{X_1^{(i_1)}X_2^{(i_2)} \cdots X_n^{(i_n)}\}$
are defined by the rule:
\[X^IX^J={I + J \choose I}X^{I+J},\]
where
 \[X^I=X_1^{(i_1)} X_2^{(i_2)} \cdots X_n^{(i_n)},\]
\[X^J=X_1^{(j_1)} X_2^{(j_2)} \cdots X_n^{(j_n)},\]
\[I=(i_1, i_2, \ldots, i_n),\]  
\[J=(j_1, j_2, \ldots, j_n),\]
\[I+J=(i_1+j_1, i_2+j_2, \ldots, i_n+j_n),\]
\[{I+J \choose J}= \frac{(i_1+j_1)!(i_2+j_2)!\cdots (i_n+j_n)!}{i_1!i_2!\cdots i_n!j_1!j_2!\cdots j_n!}\]

Note that the coefficients ${I + J \choose J}$ are  integers. 
If the characteristic $p$ of $K$ is zero, we may think  $X^{(k)}=X^k/k!$, and 
the divided power algebra is the polynomial ring. If $p > 0$, then 
$p$th power is zero (which is easy to check), hence $\Gamma$ is not finitely generated.

We may regard $\Gamma$ as an $R$-module by the openration 
\[x_1^{i_1}x_2^{i_2}\cdots x_n^{i_n}\circ X_1 ^{(j_1)}X_2^{(j_2)} \cdots X_n^{(j_n)}=X_1 ^{(j_1-i_1)}X_2^{(j_2-i_2)} 
\cdots X_n^{(j_n-i_n)}.\]

At the start of Section 2, we set $\Gamma =R$ and the action of $f$ for  $F\in \Gamma$ be 
\[f\circ F=f\left(\frac{\pa}{\pa x_1}, \frac{\pa}{\pa x_2}, \ldots,  \frac{\pa}{\pa x_n}\right)F.\]
The reader may convince himself that this interpretation of the injective hull of 
$R/R_{+}$ and the construction of $\Gamma$ are consistent. 
For this it is enough to see that 
\[x_j\circ X_1^{(i_1)}X_2^{(i_2)} \cdots X_j^{(i_j)} \cdots X_n ^{(i_n)}=
X_1^{(i_1)}X_2^{(i_2)} \cdots X_j^{(i_j-1)} \cdots X_n ^{(i_n)}.\]
\noindent Thus the variable  $x_k$ acts on $\Gamma$ by ``differentiation.''

The reasons why we use this formulation are, among other things the following two propositions.
\begin{proposition}  
Let $F \in R_d$ and assume that $p=0$ or $p > d$.  
Then if the partials are dependent, then one variable can be eliminated from $F$ by 
a linear transformation of the variables. 
\end{proposition}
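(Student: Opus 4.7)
The plan is to exploit the given linear dependence among the partials to pick a change of coordinates in which one new variable has derivative zero in $F$, and then to use the characteristic hypothesis to upgrade ``derivative zero'' to ``does not appear''.

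First I would fix a nonzero vector $c=(c_1,\ldots,c_n)\in K^n$ such that $\sum_i c_i\,\partial F/\partial x_i=0$. Since $c\neq 0$, it can be extended to a basis of $K^n$: choose an invertible matrix $N\in GL_n(K)$ whose first column is $c$, and introduce new variables $y_1,\ldots,y_n$ via the linear substitution $x=Ny$ (equivalently $y=N^{-1}x$). By the chain rule,
\[
\frac{\partial}{\partial y_1}=\sum_{i=1}^n\frac{\partial x_i}{\partial y_1}\frac{\partial}{\partial x_i}=\sum_{i=1}^n N_{i1}\frac{\partial}{\partial x_i}=\sum_{i=1}^n c_i\frac{\partial}{\partial x_i},
\]
so $\partial F/\partial y_1=0$ when $F$ is rewritten in the variables $y_1,\ldots,y_n$.

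Next I would show that $\partial F/\partial y_1=0$ forces $F$ to be independent of $y_1$. Expand $F$ as a polynomial in $y_1$ with coefficients in $K[y_2,\ldots,y_n]$:
\[
F=\sum_{k=0}^{d}F_k(y_2,\ldots,y_n)\,y_1^{k},
\]
which is legitimate because $\deg F=d$. Then $\partial F/\partial y_1=\sum_{k\geq 1}k\,F_k\,y_1^{k-1}=0$, so $k\,F_k=0$ for every $1\leq k\leq d$. The characteristic hypothesis $p=0$ or $p>d$ ensures $k\neq 0$ in $K$, hence $F_k=0$ for $k\geq 1$, and $F=F_0(y_2,\ldots,y_n)$ does not involve $y_1$.

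The only real obstacle is the role of the characteristic assumption: in characteristic $p\leq d$ the vanishing of $\partial F/\partial y_1$ is compatible with the presence of $y_1^p$ (or any $y_1^{mp}$) terms, and the conclusion genuinely fails there. The rest of the argument is essentially just the chain rule applied to a convenient choice of basis for $R_1$.
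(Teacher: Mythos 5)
Your argument is correct and complete: the paper leaves this proof to the reader, and your chain-rule change of basis followed by the expansion $F=\sum_k F_k y_1^k$ with $kF_k=0$ is exactly the standard argument the authors intend, with the characteristic hypothesis used in precisely the right place (to invert $k$ for $1\le k\le d$). Your closing remark that the statement genuinely fails in characteristic $p\le d$ (e.g.\ $F=y_1^p$) correctly identifies why the hypothesis is not merely technical.
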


Proof is left to the reader.

\begin{proposition}     
Assume that $p=0$ or $p > d$.  
Let $F \in R_d$, and $A=A(F)$. 
Then the following conditions are equivalent. 
\begin{enumerate}
\item
The Hessian determinant of $F$ does not vanish. 
\item
There exists a linear form $l \in R_1$ such that 
the multiplication map 
\[l^{d-2}:A_2 \to A_{d-1}\]
is bijective. 
\end{enumerate}
\end{proposition}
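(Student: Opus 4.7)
The plan is to realize the multiplication map $l^{d-2}\colon A_1\to A_{d-1}$ (I read the stated target $A_{d-1}$ together with the exponent $d-2$ as forcing the source to be $A_1$) as the Gram matrix of a symmetric bilinear form on $A_1$, and to identify that matrix, up to a nonzero scalar, with the Hessian of $F$ evaluated at the point $\xi=(\xi_1,\ldots,\xi_n)$ corresponding to $l=\sum\xi_i x_i$. Since $A$ is Gorenstein, the multiplication pairing $A_1\times A_{d-1}\to A_d$ is perfect, and hence bijectivity of $l^{d-2}$ is equivalent to non-degeneracy of the form
\[
\beta_l\colon A_1\times A_1\to A_d\cong K, \qquad \beta_l(u,v)=l^{d-2}uv,
\]
where the identification $A_d\cong K$ is the usual one sending $\overline{g}\mapsto g\circ F$.

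To compute the matrix of $\beta_l$ I would use associativity of the $\circ$-action together with Lemma~\ref{watanabe_queen}. Explicitly,
\[
\beta_l(x_i,x_j) \;=\; (x_i x_j\, l^{d-2})\circ F \;=\; l^{d-2}\circ(\pa_i\pa_j F),
\]
and since $\pa_i\pa_j F\in R_{d-2}$, applying Lemma~\ref{watanabe_queen} in degree $d-2$ (this is precisely where the hypothesis $p=0$ or $p>d$ is used) yields
\[
\beta_l(x_i,x_j) \;=\; (d-2)!\,(\pa_i\pa_j F)(\xi_1,\ldots,\xi_n).
\]
Thus the Gram matrix of $\beta_l$ in the generators $x_1,\ldots,x_n$ of $A_1$ is $(d-2)!$ times the Hessian of $F$ at the point $\xi$.

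Granted this identification, both implications follow by specialization. If $\det\mathrm{Hess}(F)$ is not the zero polynomial, then since $K$ is infinite (in characteristic zero, or after harmless extension) we may pick $\xi\in K^n$ at which $\det\mathrm{Hess}(F)(\xi)\neq 0$; the corresponding $l$ makes $\beta_l$ non-degenerate, hence $l^{d-2}$ bijective. Conversely, if the Hessian vanishes identically as a polynomial, then $\beta_l$ is degenerate for every $l$, and $l^{d-2}$ is never injective.

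The main subtlety I foresee is handling the possibility that the first partials of $F$ are linearly dependent, so that $\dim A_1<n$ and $x_1,\ldots,x_n$ is not a basis of $A_1$. In that case a relation $\sum c_i\pa_i F=0$ differentiated once more yields $\sum c_i\pa_i\pa_j F=0$ for each $j$, forcing $\det\mathrm{Hess}(F)\equiv 0$; simultaneously $\sum c_i x_i\in\Ann_R(F)_1$, so the Gram matrix of $\beta_l$ has a null combination for every $l$. Hence both conditions fail together in the degenerate case, and we may reduce to the situation where the partials are linearly independent, in which case the Hessian computation becomes an honest equality of $n\times n$ matrices and the proof concludes.
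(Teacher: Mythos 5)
The paper does not actually prove this proposition; it delegates the proof to \cite{maeno_watanabe}. Your argument is precisely the one given there: realize $\beta_l(u,v)=(uvl^{d-2})\circ F$ as a bilinear form on $A_1$, compute $\beta_l(x_i,x_j)=l^{d-2}\circ(\pa_i\pa_j F)=(d-2)!\,(\pa_i\pa_j F)(\xi)$ via Lemma~\ref{watanabe_queen}, and use the perfection of the Gorenstein pairing $A_1\times A_{d-1}\to A_d$ to translate bijectivity of $\times l^{d-2}$ into nondegeneracy of $\beta_l$. Your reading of the statement ($A_1\to A_{d-1}$, not $A_2\to A_{d-1}$) is the correct repair of the typo, and the core computation and the specialization argument are sound.

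There is, however, a genuine error in your treatment of the degenerate case, and it is not a removable one: when the first partials of $F$ are linearly dependent, the two conditions do \emph{not} ``fail together.'' Take $F=x_1^d$ in $R=K[x_1,x_2]$ with $d\ge 3$: then $\det\mathrm{Hess}(F)\equiv 0$, yet $A=A(F)\cong K[x_1]/(x_1^{d+1})$ has $\dim A_1=\dim A_{d-1}=1$ and $l=x_1$ makes $\times l^{d-2}:A_1\to A_{d-1}$ bijective. The flaw in your reasoning is the step ``the Gram matrix of $\beta_l$ has a null combination for every $l$, hence condition (2) fails'': the null vector $(c_1,\ldots,c_n)$ corresponds to $\sum c_ix_i=0$ in $A_1$, so it records only that $x_1,\ldots,x_n$ fail to be a basis of $A_1$, not that $\beta_l$ is degenerate on $A_1$. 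The correct resolution is that the proposition implicitly assumes $\Ann_R(F)_1=0$ (equivalently, that the partials are independent); the proposition immediately preceding this one in the appendix exists exactly to justify reducing to that case by eliminating superfluous variables, after which your $n\times n$ identification is an honest Gram matrix and the proof closes. A smaller caveat: in characteristic $p>d$ the field may be finite, so ``$\det\mathrm{Hess}(F)$ not identically zero'' need not produce a $K$-rational point of nonvanishing; one should assume $K$ infinite (or read condition (1) geometrically), a hypothesis also implicit in \cite{maeno_watanabe}.
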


For proof see \cite{maeno_watanabe}.  
The second proposition was not explicitly used in this paper, but it is a strong motivation for the usage of 
the divided power algebra.  

\vspace{1ex}
Suppose that $F \in \Gamma_d$.  Then $A(F)=R/\Ann _R(F)$ is an Artinian  Gorenstein algebra.   
Macaulay's double annihilator Theorem can be stated as follows: 

\begin{theorem}[F.\ S.\ Macaulay]
Let $R$ be the polynomial ring and $\Gamma$  the divided power algebra as the injective hull of $R/R_{+}$. 
  The correspondence $F \mapsto A(F)$ has the inverse. 
I.e., the set of graded Artinian Gorenstein algebra $A=R/I$ with top degree $d$ and   
the set of homogeneous forms in $\Gamma _d$ of degree $d$ are in one-to-one correspondence up to
linear change of variables over the field $K$.   
\end{theorem}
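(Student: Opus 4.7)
The plan is to realize the correspondence $F \mapsto A(F)$ as the cyclic, top-degree-$d$ restriction of a Galois-type duality between homogeneous ideals of finite colength in $R$ and finitely generated graded $R$-submodules of $\Gamma$, then read the bijection statement off this duality. For a homogeneous ideal $I \subseteq R$ set $\Ann_\Gamma(I) = \{G \in \Gamma : I \circ G = 0\}$, and for a graded submodule $M \subseteq \Gamma$ set $\Ann_R(M) = \{r \in R : r \circ M = 0\}$. The contraction pairing $R_i \times \Gamma_i \to K$ is perfect by the construction of the dual basis $\{X^I\}$ to $\{x^I\}$ given in the appendix, so a graded-piece dimension count yields the double-annihilator identities $\Ann_R(\Ann_\Gamma(I)) = I$ and $\Ann_\Gamma(\Ann_R(M)) = M$ whenever $R/I$ is Artinian and $M$ is a finite-dimensional graded submodule. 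Along the way one obtains the $R$-module isomorphism $\Ann_\Gamma(I) \cong \Hom_K(R/I, K)$, identifying $\Ann_\Gamma(I)$ with the Matlis dual of $A = R/I$.

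For the forward direction, given $F \in \Gamma_d$, the $R$-linear surjection $r \mapsto r \circ F$ has kernel $\Ann_R(F)$, so $A(F) \cong R \circ F$ as graded $R$-modules up to grading reversal. Since $F \in \Gamma_d$, the image lies in degrees $0, \ldots, d$ with $(R \circ F)_d = KF$, so $A(F)$ is Artinian Gorenstein with socle degree $d$. Conversely, given a graded Artinian Gorenstein algebra $A = R/I$ of socle degree $d$, I claim $\Ann_\Gamma(I) = R \circ F$ for some $F \in \Gamma_d$; applying $\Ann_R$ and the double-annihilator identity then forces $\Ann_R(F) = I$, giving $A = A(F)$. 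The claim is the translation, via Matlis duality, of the fact that $\Hom_K(A, K)$ is cyclic as an $A$-module, a standard consequence of the Gorenstein hypothesis that $\mathrm{soc}(A)$ is one-dimensional and concentrated in degree $d$; any nonzero $F$ in the one-dimensional top slot $\Ann_\Gamma(I)_d \cong \Hom_K(A_d, K)$ is such a generator.

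For uniqueness up to linear change of variables, two forms $F, F' \in \Gamma_d$ with $\Ann_R(F) = \Ann_R(F')$ differ by a scalar, since $\Ann_\Gamma(I)_d$ is one-dimensional; more generally, an isomorphism of graded $K$-algebras $R/\Ann_R(F) \cong R/\Ann_R(F')$ is necessarily induced by a linear substitution of the variables of $R$, and the contragredient of this substitution carries $F$ to a scalar multiple of $F'$ in $\Gamma_d$. Hence the bijection descends to $GL_n(K)$-orbits on both sides, as claimed. The genuinely non-formal step---and the main obstacle---is the biconditional ``$A$ Gorenstein with one-dimensional top socle'' $\Leftrightarrow$ ``$\Hom_K(A, K)$ cyclic as an $A$-module.'' This is where the Gorenstein hypothesis does real work; I would prove it using that an Artinian Gorenstein local ring has a unique minimal ideal (so its Matlis dual is cyclic), and conversely that cyclicity of $\Hom_K(A, K)$ forces the socle to be one-dimensional via the perfectness of the induced socle pairing $A_i \times A_{d-i} \to A_d \cong K$.
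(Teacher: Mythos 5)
The paper does not actually prove this theorem: it states it and defers entirely to Meyer--Smith \cite[Theorem~II.2.1]{smith_1002} and to Macaulay's original work. Your proposal supplies a genuine argument, and it is essentially the standard inverse-systems proof that those references develop: perfect contraction pairing $R_i\times\Gamma_i\to K$, the two annihilator operators as a Galois connection, Matlis duality identifying $\Ann_\Gamma(I)$ with $\Hom_K(R/I,K)$, and the equivalence ``socle one-dimensional $\Leftrightarrow$ dual module cyclic.'' The overall structure is correct, and you have correctly isolated the one non-formal step (the Gorenstein/cyclicity biconditional). Three points in your sketch carry more weight than the phrasing suggests and should be spelled out if this were written in full. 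First, the double-annihilator identities are not a degree-by-degree dimension count alone: one must first use the ideal structure of $I$ (resp.\ the submodule structure of $M$) together with perfectness of the pairing to show $\Ann_\Gamma(I)_j=(I_j)^{\perp}$ and $\Ann_R(M)_j=(M_j)^{\perp}$ --- e.g.\ for $G\in\Gamma_j$, $I_j\circ G=0$ forces $I_i\circ G=0$ for $i<j$ because $R_{j-i}I_i\subset I_j$ and the pairing in degree $j-i$ is perfect --- and only then does the double-perp for finite-dimensional spaces close the loop. Second, to get a cyclic generator of $\Ann_\Gamma(I)$ sitting in degree exactly $d$, the clean route is graded Nakayama: $\Ann_\Gamma(I)/\bigl(R_+\circ\Ann_\Gamma(I)\bigr)$ is dual to the socle of $A$, hence one-dimensional in degree $d$, so any $F$ spanning $\Ann_\Gamma(I)_d$ generates. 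Third, the uniqueness claim uses that a graded $K$-algebra isomorphism $R/\Ann_R(F)\cong R/\Ann_R(F')$ lifts to an \emph{invertible} linear substitution of the variables even when the annihilators contain linear forms; this requires choosing compatible complements of $I_1$ and $I'_1$ in $R_1$, not just lifting the images of the $\overline{x_i}$. None of these is a gap in the sense of a wrong idea --- each is a routine verification --- so your proof is correct in outline and, unlike the paper, self-contained.
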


For proof see Meyer-Smith \cite[Theorem~II.2.1] {smith_1002}. See also the original work of Macaulay~\cite{macaulay}.

\begin{example}
  Let $R=K[x,y,z]$ and 
  consider  $F=(x+y+z)^3 \in K[x,y,z]_3$.
  We have defined $f\circ F =f(\frac{\pa}{\pa x},\frac{\pa}{\pa y},\frac{\pa}{\pa z})F$ for  $f,F \in R$. 
  Then we have 
  $\Ann _R(F)=(x-y, x-z, z^4)$. 
  F.\ S.\ Macaulay originally used   ``contract'' (rather than the differential operator) to make $R$ the injective hull for $R$.  
  In this case we have 
  \[\Ann _R(F)=(9x^2-10xz-4yz+15z^2,9y^2-10xy-4xz + 15x^2,  9z^2-10 yz -4 xy +15y^2). \]
In either case the double annihilator theorem holds. 
\end{example}

\end{document}